\newcommand{\field}[1]{\mathbb{#1}}
\newtheorem{definition}{Definition}
\newtheorem{theorem}{Theorem}
\newtheorem{proposition}{Proposition}
\newtheorem{corollary}{Corollary}
\title[On calculus with a quaternionic variable and its characteristic...]{On calculus with a quaternionic variable and its characteristic Cauchy-Riemann type equations}
\author{Daniel Alay\'{o}n-Solarz}
\address{Departamento de Matem\'aticas Puras y Aplicadas\\
Universidad Sim\'on Bol\'ivar \\
Caracas \\
Venezuela.
}
\email{danieldaniel@gmail.com}
\begin{document}
\maketitle

\begin{abstract}
We show sufficient and necessary conditions, in terms of partial differential equations with variable coefficients, for a quaternionic function to admit a continuous derivative in a open set in the sense of C. Schwartz.
\end{abstract}

\section{Introduction}

Let $\field{H}$ denote the quaternions , let $q = t + xi +yj +zk$ be a quaternion written as $q = t + r \iota$, 
where $t$ is the real part of the quaternion, $r \geq 0$   and $\iota^2 = -1$. The problem
of the quaternionic left-derivative starts by considering the limit
\begin{displaymath}
\lim_{h \to 0} h^{-1}(f(q+h) - f(q) - hA) = 0
\end{displaymath} 
for some number $A$. It is well known that the only functions left-derivable in $\field{H}$ in the above sense turn out to be $f(q) = qa + b$
for some $a,b \in \field{H}$. 
A new approach for this problem, which results in a larger class of functions has been introduced in \cite{CS}. 

\begin{definition}
Let $f$ be a quaternionic function that is continuous and real differentiable at $q$.
We say $f$ is
\textit{S-derivable by the left at $q$} if there exists quaternions $A$ or $B$ and $C$ such that the limit
\begin{displaymath} 
\lim_{h \to 0}
\begin{cases} 
 h^{-1} (f(q+ h) - f(q) - h A)   &\text{if } q \in \field{R} \\
 h^{-1} (f(q+ h) - f(q) - h_\parallel B -  h_\perp C)  &\text{if } q \in \field{H} \setminus \field{R}
\end{cases}
\end{displaymath} 
where
\begin{displaymath}
h_\parallel := \frac{h - \iota h \iota}{2}, \ \  h_\perp := \frac{h + \iota h \iota}{2}
\end{displaymath}
vanishes at $q$. Then we say the function $f$ is S-derivable at $q$. The numbers $A$ and $B$ we shall call the \textit{parallel derivative} and C the \textit{perpendicular derivative} at $q$.
\end{definition}
Just like in the complex setting we would like to consider functions that are derivable in this new, broader sense in some open set $\Omega \subseteq \field{H}$.
\begin{definition}
Let $f: \Omega \subseteq \field{H} \to \field{H}$ be a quaternionic function that is continuously real-differentiable. We say $f$ is
\textit{ continuosly S-derivable by the left in $\Omega$} if there exists continuous quaternionic functions $A(q): \Omega \cap \field{R} \to \field{H}$ and  $B(q),C(q): \Omega \setminus \field{R} \to \field{H}$ such that 
\begin{displaymath} 
\lim_{h \to 0}
\begin{cases} 
 h^{-1} (f(q+ h) - f(q) - h A(q)) = 0   &\text{if } q \in \field{R} \\
 h^{-1} (f(q+ h) - f(q) - h_\parallel B(q) -  h_\perp C(q)) = 0 &\text{if } q \in \field{H} \setminus \field{R}
\end{cases}
\end{displaymath}
for all $q \in \Omega \subseteq \field{H}$.
\end{definition}
In the complex case one obtains characteristic equations, namely the Cauchy-Riemann equations by evaluation of the derivative with two
prescribed directions, the real and the pure imaginary.
The result we consider first that this class can be characterized by \textit{some} partial differential equations, just like in the commutative complex
setting. More precisely:

\begin{theorem}
A function quaternionic function $f$ whose partial derivatives are continous in some open set  $\Omega$, then $f$  is  continuously 
S-derivable in $\Omega$ if, and only
if,  it satisfies the following partial differential equations in 
$\Omega \cap  \field{R}$
\begin{equation}
\frac{\partial f}{\partial x} = i \frac{\partial f}{\partial t}
\end{equation}
\begin{equation}
\frac{\partial f}{\partial y} = j \frac{\partial f}{\partial t}
\end{equation}
\begin{equation}
\frac{\partial f}{\partial y} = k \frac{\partial f}{\partial t}
\end{equation}
and the following partial differential equations in $\Omega \setminus  \field{R}$

\begin{equation}
\frac{\partial f}{\partial x} = \Big( \frac{i - \iota i \iota}{2} \Big) \frac{\partial f}{\partial t}   \ - \  \Big( \frac{i + \iota i \iota}{4} \Big) Df 
\end{equation}
\\
\begin{equation}
\frac{\partial f}{\partial y} = \Big( \frac{j - \iota j \iota}{2} \Big) \frac{\partial f}{\partial t}   \ - \  \Big( \frac{j + \iota j \iota}{4} \Big) Df
\end{equation}
\\
\begin{equation}
\frac{\partial f}{\partial z} = \Big( \frac{k - \iota k \iota}{2} \Big) \frac{\partial f}{\partial t}   \ - \  \Big( \frac{k + \iota k \iota}{4} \Big) Df 
\end{equation}
where D denotes the left Fueter operator:
\begin{displaymath}
Df := \frac{\partial f}{\partial t} +  i \frac{\partial f}{\partial x}+  j \frac{\partial f}{\partial y}+  k \frac{\partial f}{\partial z}
\end{displaymath}
\end{theorem}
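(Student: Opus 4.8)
The plan is to reduce S-derivability at a point to a purely linear-algebraic condition on the real differential of $f$, and then to read off the stated equations by testing against the standard basis $1,i,j,k$. Write $L$ for the real differential of $f$ at a fixed $q$, so that $f(q+h)-f(q)=L(h)+\rho(h)$ with $\rho(h)=o(|h|)$ and $L(h)=\frac{\partial f}{\partial t}h_0+\frac{\partial f}{\partial x}h_1+\frac{\partial f}{\partial y}h_2+\frac{\partial f}{\partial z}h_3$ for $h=h_0+h_1i+h_2j+h_3k$; the hypothesis that the partials are continuous guarantees that this differential exists. Since $|h^{-1}\rho(h)|=|\rho(h)|/|h|\to 0$, for a non-real $q$ the defining limit vanishes if and only if $\lim_{h\to 0}h^{-1}M(h)=0$, where $M(h):=L(h)-h_\parallel B-h_\perp C$ and $\iota$ is the fixed pure unit attached to $q$. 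The first key observation is that $M$ is real-linear (the maps $h\mapsto h_\parallel$, $h\mapsto h_\perp$ are real-linear and right multiplication is real-linear), so $h^{-1}M(h)$ is homogeneous of degree $0$ and constant along each ray through the origin; the limit is therefore $0$ exactly when $M\equiv 0$. Thus \emph{S-derivability at $q$ is equivalent to the identity $L(h)=h_\parallel B+h_\perp C$ for all $h\in\field{H}$}, and to $L(h)=hA$ for all $h$ when $q\in\field{R}$.

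With this equivalence in hand I would evaluate the identity on the basis. Testing $h=1$ gives $1_\parallel=1$, $1_\perp=0$ (since $\iota\,1\,\iota=\iota^2=-1$), hence $B=\partial f/\partial t$, and likewise $A=\partial f/\partial t$ in the real case. Testing $h=i,j,k$ then yields three equations of the shape $\partial f/\partial x=i_\parallel B+i_\perp C$, and the remaining task is to identify $C$ intrinsically. To do this I would contract the identity with the Fueter operator: from $Df=L(1)+iL(i)+jL(j)+kL(k)=\sum_{e}e\,L(e)$ and $L(e)=e_\parallel B+e_\perp C$ one gets
\[
Df=\Big(\sum_{e}e\,e_\parallel\Big)B+\Big(\sum_{e}e\,e_\perp\Big)C,
\]
the sums running over $e\in\{1,i,j,k\}$.

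The computational heart of the argument is the evaluation of these two sums. The clean route is the basis identity $\sum_{e}e\,p\,e=-2\bar p$ for every $p\in\field{H}$, a one-line check on $p=1,i,j,k$, which gives $\sum_{e}e\,\iota\,e\,\iota=(-2\bar\iota)\iota=2\iota^2=-2$ because $\iota$ is a pure unit (so $\bar\iota=-\iota$). Together with $\sum_e e^2=-2$ this yields $\sum_e e\,e_\parallel=\tfrac12(\sum_e e^2-\sum_e e\iota e\iota)=0$ and $\sum_e e\,e_\perp=\tfrac12(\sum_e e^2+\sum_e e\iota e\iota)=-2$, so the contraction collapses to $Df=-2C$, that is $C=-\tfrac12 Df$. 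Substituting $B=\partial f/\partial t$ and $C=-\tfrac12 Df$ into $\partial f/\partial x=i_\parallel B+i_\perp C$ and using $i_\perp=\tfrac{i+\iota i\iota}{2}$ reproduces equation (4) verbatim, and the analogous substitutions for $j,k$ give (5) and (6); in the real case testing $i,j,k$ against $L(h)=hA$ gives (1)--(3) directly.

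For the converse I would run this backwards: assuming the stated equations, set $A=B=\partial f/\partial t$ and $C=-\tfrac12 Df$, and observe that $L(h)=h_\parallel B+h_\perp C$ then holds on the basis by construction; since both sides are real-linear, it holds for all $h$, whence $M\equiv 0$ and the limit vanishes as above, while continuity of $A$, $B$, $C$ is inherited from continuity of the partials. I expect the main obstacle to be organizational rather than deep: one must justify the degree-zero homogeneity step cleanly, and one must verify that the three equations obtained from $i,j,k$ are \emph{simultaneously} solved by the single quaternion $C=-\tfrac12 Df$. This consistency is precisely what the trace identity $\sum_e e\iota e\iota=-2$ guarantees, so that is the step I would check most carefully.
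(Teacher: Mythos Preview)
Your proof is correct and follows essentially the same route as the paper: in both cases the necessity direction is obtained by testing the defining limit along the basis $1,i,j,k$, then contracting the resulting three equations with $i,j,k$ on the left to solve $C=-\tfrac12 Df$, and the sufficiency direction uses real differentiability to reduce to showing $M_fh=h_\parallel\,\partial f/\partial t-\tfrac12 h_\perp\,Df$ on the basis. Your packaging is a bit tidier---you isolate the linear-algebraic content up front via the degree-zero homogeneity of $h^{-1}M(h)$, and you compute the contraction via the single identity $\sum_{e\in\{1,i,j,k\}} e\,p\,e=-2\bar p$ rather than the paper's two separate sums over $\{i,j,k\}$---but the underlying argument is the same.
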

This equations clearly play here the role of the Cauchy-Riemann equations played in the commutative setting. 
We then show that if a function is S-derivable then is necessarily Cullen-regular (also termed Slide regular) in the sense of of G. Gentili and D.C Struppa.
Cullen-regular functions have been shown, see \cite{GG} to be expressible as absolutely convergent  quaternionic unilateral power series assuming
the domain of the function is a ball centered at the origin with radius $R$. One consequence of the previous result is that it allows to state the following 
characterization of a continuously S-derivable function in a ball of radius $R$ centered at the origin.
\begin{theorem}
Let $f: B(0,R) \to \field{H}$ be a $C^{1}$ quaternionic function. Then $f$  is S-derivable by the left if and only if 
it is expressible as a absolutely
convergent power series
\begin{displaymath}
f(q) = \sum_{k = 0}^{\infty} q^k a_k, \ a_k \in \field{H}
\end{displaymath}
and
\begin{displaymath}
A(q) = \frac{\partial f}{\partial t}, \ q \in B(0,R) \cap \field{R}
\end{displaymath}
\begin{displaymath}
B(q) = \frac{\partial f}{\partial t}, \ q \in B(0,R) \cap \field{H} \setminus \field{R}
\end{displaymath}
\begin{displaymath}
C(q)= (q- \overline q)^{-1} [f(q)-f(\overline{q})]
\end{displaymath}
\end{theorem}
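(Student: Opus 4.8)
The plan is to establish both implications by routing through the power-series representation and then reading off the three derivatives from a single expansion of the monomials $q^{k}$. For the necessity (``only if'') direction I would invoke the two facts already announced in the introduction: that an S-derivable function is Cullen-regular, and that (by \cite{GG}) a Cullen-regular function on a ball centred at the origin coincides with an absolutely convergent series $\sum_{k=0}^{\infty} q^{k} a_{k}$. This hands me the series for free, so the whole content of the theorem reduces to identifying $A$, $B$ and $C$. For the sufficiency (``if'') direction I would start from the series and verify the defining limit of S-derivability directly; the very computation that verifies the limit also exhibits $A$, $B$, $C$, so both directions share their analytic core.

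The heart of the argument is the first-order expansion of a monomial. Writing $(q+h)^{k}-q^{k}=L_{k}(h)+O(|h|^{2})$ with the real-linear term $L_{k}(h)=\sum_{m=0}^{k-1} q^{m}\,h\,q^{k-1-m}$, I would exploit the commutation behaviour dictated by the splitting $h=h_{\parallel}+h_{\perp}$. With $\iota=(q-t)/r$, the parallel part lies in $\field{R}+\field{R}\iota$ and commutes with $q$, whereas the perpendicular part anticommutes with $\iota$ and therefore satisfies $q\,h_{\perp}=h_{\perp}\,\overline{q}$, hence $q^{m}h_{\perp}=h_{\perp}\overline{q}^{\,m}$. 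Consequently $L_{k}(h_{\parallel})=h_{\parallel}\,k\,q^{k-1}$ and, using the telescoping identity $\sum_{m=0}^{k-1}\overline{q}^{\,m}q^{k-1-m}=(q-\overline{q})^{-1}(q^{k}-\overline{q}^{\,k})$ (valid because $q$ and $\overline{q}$ commute), $L_{k}(h_{\perp})=h_{\perp}\,(q-\overline{q})^{-1}(q^{k}-\overline{q}^{\,k})$.

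Multiplying by $a_{k}$ and summing then assembles the two pieces into the claimed derivatives: the parallel contributions give $h_{\parallel}\sum_{k}k q^{k-1}a_{k}=h_{\parallel}\,\partial_{t} f$, so $B=\partial_{t} f$, and the perpendicular contributions give $h_{\perp}(q-\overline{q})^{-1}\sum_{k}(q^{k}-\overline{q}^{\,k})a_{k}=h_{\perp}(q-\overline{q})^{-1}[f(q)-f(\overline{q})]$, so $C=(q-\overline{q})^{-1}[f(q)-f(\overline{q})]$. On the real axis the situation collapses because $q=t$ is central: the ordinary binomial theorem gives $(t+h)^{k}-t^{k}=h\,k t^{k-1}+O(|h|^{2})$, whence $A=\sum_{k}k t^{k-1}a_{k}=\partial_{t} f$. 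Alternatively one could feed these partial derivatives into the equations of Theorem 1, but the direct expansion is cleaner and produces $A$, $B$, $C$ simultaneously.

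The step I expect to be the main obstacle is the uniform control of the remainder. Each expansion carries an $O(|h|^{2})$ error whose constant grows with $k$, so to conclude that the full S-derivability limit actually vanishes I must dominate the infinite sum of these errors rather than a single one. I would handle this by restricting to a ball of radius $\rho<R$ and using the Cauchy/Abel estimates furnished by absolute convergence, bounding $\sum_{k}\|a_{k}\|\,\bigl|(q+h)^{k}-q^{k}-L_{k}(h)\bigr|$ by $|h|^{2}\sum_{k}\|a_{k}\|\,k^{2}\rho^{k-2}$, a convergent series, so that the whole remainder is $O(|h|^{2})=o(|h|)$ as $h\to0$. Throughout, the noncommutativity forces me to keep scrupulous track of whether each factor sits to the left or the right, but the two commutation relations $q h_{\parallel}=h_{\parallel} q$ and $q h_{\perp}=h_{\perp}\overline{q}$ make every placement explicit and render the bookkeeping routine.
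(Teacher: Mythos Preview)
Your argument is correct, but it takes a different route from the paper. The paper never expands $(q+h)^{k}$ directly. Instead, for the sufficiency direction it writes each monomial as $q^{n}=u_{n}(t,r)+\iota\,v_{n}(t,r)$ with real $u_{n},v_{n}$ satisfying the planar Cauchy--Riemann equations, observes that the full series inherits the form $f=u+\iota v$ with $u,v$ independent of the spherical angles, and then feeds this into the structural criterium (Theorem~3), which in turn rests on the PDE characterization of Theorem~1. The formula for $C$ then drops out of the identity $f(\overline{q})=u-\iota v$, giving $C=v/r=(q-\overline{q})^{-1}[f(q)-f(\overline{q})]$.

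Your approach bypasses Theorems~1 and~3 entirely: the two commutation rules $qh_{\parallel}=h_{\parallel}q$ and $qh_{\perp}=h_{\perp}\overline{q}$ let you read off $B$ and $C$ straight from the linearization $L_{k}(h)=\sum_{m}q^{m}hq^{k-1-m}$, and the telescoping sum $\sum_{m=0}^{k-1}\overline{q}^{\,m}q^{k-1-m}=(q-\overline{q})^{-1}(q^{k}-\overline{q}^{\,k})$ delivers the closed form for $C$ without ever introducing $u$ and $v$. This is closer in spirit to Schwartz's original computation and is more self-contained; the paper's route, by contrast, exhibits how the power-series class sits inside the larger PDE framework it has built, at the cost of invoking that machinery. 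Your remainder estimate via $\sum_{k}\|a_{k}\|k^{2}\rho^{k-2}$ is the standard device and is fine; the paper sidesteps this analytic point by appealing to termwise differentiation of a uniformly convergent series.
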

Thus recovering the original result of C. Schwartz.
\section{Necessary condition for S-differentiability using directions based on $1,i,j$ and $k$}
In complex calculus the necessity of the Cauchy-Riemann equations is usually obtained by considering the limit that
defines the derivative in the real and pure imaginary directions at a point.  Proceeding analogously with the quaternionic
case one obtains corresponding identities.
\begin{proposition} 
Let $f$ be continuously S-differentiable in $\Omega$, then 
\begin{displaymath}
A(q) = \frac{\partial f}{\partial t}(q), q \in \Omega 
\end{displaymath}
\end{proposition}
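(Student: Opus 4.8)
The plan is to isolate $A(q)$ from the defining limit by approaching $q$ along the real axis, exploiting the fact that real scalars are central in $\field{H}$. Since $A$ is only defined on $\Omega \cap \field{R}$, I read the statement as asserting the identity for real $q$, so I fix such a $q$ and work with the first branch of the S-derivability condition.

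First I would specialize the increment $h$ appearing in the definition of continuous S-derivability to real values $h = s \in \field{R}$ with $s \to 0$. Because the hypothesis guarantees that the limit exists and equals zero for every mode of approach, restricting to the real axis is legitimate and yields the same value. For real $s$ the quaternionic inverse $h^{-1} = s^{-1}$ is itself a real scalar, hence it commutes with and distributes over every quaternion; the defining limit therefore collapses to
\begin{displaymath}
\lim_{s \to 0} s^{-1}\bigl(f(q+s) - f(q) - s A(q)\bigr)
 = \lim_{s \to 0} \left[ \frac{f(q+s) - f(q)}{s} - A(q) \right] = 0 .
\end{displaymath}

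Next I would identify the surviving difference quotient as a partial derivative. Writing $q = t_0 \in \field{R}$, the perturbed point $q + s = t_0 + s$ moves purely in the $t$-direction, so by the assumed continuous real-differentiability of $f$ the quotient $\bigl(f(q+s) - f(q)\bigr)/s$ converges to $\partial f / \partial t$ evaluated at $q$. Passing to the limit in the displayed expression gives $\partial f/\partial t(q) - A(q) = 0$, which is exactly the claimed identity.

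There is essentially no analytic obstacle here beyond bookkeeping: the one point requiring care is precisely the step of passing from the full direction-independent limit to the one-sided real-axis limit, and this is immediate since a vanishing limit over all approaches in particular vanishes along $\field{R}$. The centrality of real scalars is what renders the noncommutativity of $\field{H}$ harmless in this computation, so unlike the pure-quaternion case treated later no splitting of $h$ into its $h_\parallel$ and $h_\perp$ components is needed.
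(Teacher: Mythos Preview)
Your argument is correct and mirrors the paper exactly for real $q$: choose $h$ real, exploit centrality of real scalars, and read off the $t$-partial from the difference quotient. The only discrepancy is one of scope. You restrict to $q \in \Omega \cap \field{R}$ on the (reasonable) ground that $A$ is undefined elsewhere, whereas the paper's proof also handles non-real $q$: for $h$ real one has $h_\parallel = h$ and $h_\perp = 0$, so the second branch of the definition gives $B(q) = \partial f/\partial t$ as well. That extra conclusion is invoked immediately in the next proposition, so the paper evidently intends the statement to cover both $A$ and $B$ despite its literal wording.
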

\begin{proof}  By the nature of the definition we must check the cases when the base point $q$ is real or not. Let $q$ be real and $f$ S-derivable at $q$. Choosing $h$ as a real number and after
taking the limit as $h \to 0$ we obtain
\begin{displaymath}
A(q) = \frac{\partial f}{\partial t}(q)
\end{displaymath} 
Now assume $q$ is not real. Choose again $h$ a real number. Since $h$ is real then $h_\parallel = h $ and $h_\perp=0$, so if $f$
is S-differentiable we have
\begin{displaymath}
B(q) = \frac{\partial f}{\partial t}(q)
\end{displaymath}
for all $q \in \Omega \setminus \field{R}$.
\end{proof}
\begin{proposition} 
Let $f$ be continuously S-differentiable, then $f$ satisfies equations (1) to (6) and 
\begin{displaymath}
C(q) = -(\frac{1}{2}) Df(q)
\end{displaymath}
where $D$ is the left Fueter operator.
\end{proposition}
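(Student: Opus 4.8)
The plan is to combine continuous real-differentiability with the substitution $B(q)=\partial f/\partial t$ obtained in the preceding proposition, and then simply read off the coefficients of the independent increments. First I would fix a non-real $q$ and expand, using real-differentiability,
\begin{displaymath}
f(q+h)-f(q)=\frac{\partial f}{\partial t}h_t+\frac{\partial f}{\partial x}h_x+\frac{\partial f}{\partial y}h_y+\frac{\partial f}{\partial z}h_z+o(|h|),
\end{displaymath}
where $h=h_t+h_xi+h_yj+h_zk$. Subtracting $h_\parallel B(q)+h_\perp C(q)$ with $B(q)=\partial f/\partial t$ leaves a real-linear form $L(h)$ plus $o(|h|)$. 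Testing the S-limit along a ray $h=su$ with $s\to 0$ real, one computes $h^{-1}L(h)=u^{-1}L(u)$, which is constant in $s$; hence the vanishing of the limit forces $u^{-1}L(u)=0$, i.e. $L\equiv 0$. This reduces the whole statement to a problem of linear bookkeeping.

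Since $h\mapsto h_\parallel$ and $h\mapsto h_\perp$ are real-linear and the real direction lies entirely in the parallel part, I would write $h_\parallel=h_t+h_x\,i_\parallel+h_y\,j_\parallel+h_z\,k_\parallel$ and $h_\perp=h_x\,i_\perp+h_y\,j_\perp+h_z\,k_\perp$, where $e_\parallel:=(e-\iota e\iota)/2$ and $e_\perp:=(e+\iota e\iota)/2$ for $e\in\{i,j,k\}$. The $h_t$-terms cancel, and equating to zero the coefficients of $h_x,h_y,h_z$ in $L$ yields
\begin{displaymath}
\frac{\partial f}{\partial x}=i_\parallel\frac{\partial f}{\partial t}+i_\perp C,\quad \frac{\partial f}{\partial y}=j_\parallel\frac{\partial f}{\partial t}+j_\perp C,\quad \frac{\partial f}{\partial z}=k_\parallel\frac{\partial f}{\partial t}+k_\perp C.
\end{displaymath}
The identical coefficient-matching at a real point, where $h_\parallel=h$ and $h_\perp=0$, reproduces equations (1)--(3) immediately, so those require no separate argument.

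It remains to identify $C$. The idea is to multiply the three identities on the left by $i,j,k$ respectively and add, so that the left-hand side becomes $i\frac{\partial f}{\partial x}+j\frac{\partial f}{\partial y}+k\frac{\partial f}{\partial z}=Df-\frac{\partial f}{\partial t}$. The computation rests on the quaternionic summation identity $\sum_{e\in\{i,j,k\}}e\iota e=\iota$, valid for every pure unit $\iota$, from which one gets the real scalars $\sum_{e}e\,e_\parallel=-1$ and $\sum_{e}e\,e_\perp=-2$. Substituting gives $Df-\frac{\partial f}{\partial t}=-\frac{\partial f}{\partial t}-2C$, hence $C=-\tfrac12 Df$. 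Feeding this back through $e_\perp C=-\tfrac14(e+\iota e\iota)Df$ turns the three displayed identities into precisely equations (4)--(6). I expect the only real obstacle to be the noncommutative bookkeeping: one must keep the left/right placement of every factor straight and verify the key identity $\sum_{e}e\iota e=\iota$, which is exactly what makes the perpendicular derivative collapse onto a scalar multiple of the Fueter operator.
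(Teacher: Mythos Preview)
Your proposal is correct and follows essentially the same route as the paper: obtain the three directional identities $\partial f/\partial e = e_\parallel\,\partial f/\partial t + e_\perp\,C$ for $e\in\{i,j,k\}$, then left-multiply by $i,j,k$, sum, and use $\sum_e e\,e_\parallel=-1$, $\sum_e e\,e_\perp=-2$ to solve for $C$. The only cosmetic difference is that the paper reaches those three identities by taking the $S$-limit along $h=\epsilon i,\epsilon j,\epsilon k$ directly, whereas you package this as vanishing of a real-linear form via the first-order Taylor expansion; the algebra from that point on is identical.
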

\begin{proof}

The preceeding proposition means $A(q)$ and $B(q)$ can be replaced by $\frac{\partial f}{\partial t}$ without loss of generality. Again we start with $q$ real.
Choosing $h$ as $\epsilon i$, $\epsilon j$ and $\epsilon k$  where $\epsilon$ is real and taking the corresponding limit as $\epsilon$ goes to zero 
we obtain necessary conditions for the partial derivatives with respect to $x,y$ and $z$. If $q$ is real we obtain immediately equations (1), (2) and (3). 
If $q$ is non-real then we obtain the following equations, with $C(q)$ a function to be determined:
\begin{equation}
\frac{\partial f}{\partial x} = \Big( \frac{i - \iota i \iota}{2} \Big) \frac{\partial f}{\partial t}   \ + \  \Big( \frac{i + \iota i \iota}{2} \Big) C(q), 
\end{equation}
\\
\begin{equation}
\frac{\partial f}{\partial y} = \Big( \frac{j - \iota j \iota}{2} \Big) \frac{\partial f}{\partial t}   \ + \  \Big( \frac{j + \iota j \iota}{2} \Big) C(q),
\end{equation}
\\
\begin{equation}
\frac{\partial f}{\partial z} = \Big( \frac{k - \iota k \iota}{2} \Big) \frac{\partial f}{\partial t}   \ + \  \Big( \frac{k + \iota k \iota}{2} \Big) C(q), 
\end{equation}
\\
For this we multiply Eq. (7) by the left by $i$, Eq. (8) by the left by $j$ and Eq. (9) by the left by $k$ and sum all three
resulting equations. For the left hand side one thus obtains:
\begin{displaymath}
i \frac{\partial f}{\partial x} + j \frac{\partial f}{\partial y} +  k \frac{\partial f}{\partial y}.
\end{displaymath}
Thhe right hand side is further simplified with the following identities:
\begin{displaymath}
 i \Big(  \frac{i - \iota i \iota}{2} \Big) + j  \Big( \frac{j - \iota j \iota}{2} \Big) +  k \Big( \frac{k - \iota k \iota}{2} \Big) = -1,
\end{displaymath}
and
\begin{displaymath}
 i \Big(  \frac{i + \iota i \iota}{2} \Big) + j  \Big( \frac{j + \iota j \iota}{2} \Big) +  k \Big( \frac{k + \iota k \iota}{2} \Big) = -2.
\end{displaymath}
Thus one obtains
\begin{displaymath}
i \frac{\partial f}{\partial x} + j \frac{\partial f}{\partial y} +  k \frac{\partial f}{\partial z} = -\frac{\partial f}{\partial t} -2 C(q)
\end{displaymath}
or
\begin{displaymath}
\frac{\partial f}{\partial t} + i \frac{\partial f}{\partial x} + j \frac{\partial f}{\partial y} +  k \frac{\partial f}{\partial z} =  -2 C(q)
\end{displaymath}
\end{proof}

\section{Sufficient conditions for the S-derivability}

\begin{proposition} Let $f : \Omega \to \field{H}$ be continuously  real-differentiable and suppose that f satisfies Eq. (1) to (6) in
$\Omega$, then f is S-derivable in $\Omega$.
\end{proposition}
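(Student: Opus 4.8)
The plan is to exploit the hypothesis that $f$ is continuously real-differentiable by writing down the first-order Taylor expansion of the increment and then showing that a suitable choice of linear part reproduces it up to $o(|h|)$. Concretely, I would set $A(q)=B(q)=\frac{\partial f}{\partial t}(q)$ and $C(q)=-\tfrac12 Df(q)$; all three are continuous on their respective domains because $f$ is $C^{1}$. Writing $h=h_0+h_1 i+h_2 j+h_3 k$ with $h_\ell\in\field{R}$, real differentiability gives
\begin{displaymath}
f(q+h)-f(q)=\frac{\partial f}{\partial t}h_0+\frac{\partial f}{\partial x}h_1+\frac{\partial f}{\partial y}h_2+\frac{\partial f}{\partial z}h_3+o(|h|).
\end{displaymath}
The whole proof then reduces to showing that this $\field{R}$-linear part equals $h\,A(q)$ when $q\in\field{R}$, and equals $h_\parallel B(q)+h_\perp C(q)$ when $q\notin\field{R}$. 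Once that is established, left-multiplying the remainder by $h^{-1}$ and invoking multiplicativity of the quaternion norm gives $|h^{-1}o(|h|)|=|h|^{-1}o(|h|)\to 0$, which is exactly the two vanishing limits in the definition of S-derivability.

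For the real case I would substitute equations (1)--(3) into the linear part. Since each real scalar $h_\ell$ commutes with everything, one gets $\frac{\partial f}{\partial t}h_0+i\frac{\partial f}{\partial t}h_1+j\frac{\partial f}{\partial t}h_2+k\frac{\partial f}{\partial t}h_3=(h_0+h_1 i+h_2 j+h_3 k)\frac{\partial f}{\partial t}=h\,\frac{\partial f}{\partial t}$, which is precisely $h\,A(q)$, so the increment is $h\,A(q)+o(|h|)$.

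For the non-real case I would first rewrite the hypotheses (4)--(6) in the equivalent form (7)--(9), using that $C(q)=-\tfrac12 Df(q)$ converts $-\left(\frac{e+\iota e\iota}{4}\right)Df$ into $\left(\frac{e+\iota e\iota}{2}\right)C(q)$ for $e\in\{i,j,k\}$. The essential structural remark is that for fixed $q$ both maps $h\mapsto h_\parallel$ and $h\mapsto h_\perp$ are $\field{R}$-linear (here $\iota$ depends only on $q$, not on $h$), so $h\mapsto h_\parallel B(q)+h_\perp C(q)$ and the directional linear part above are both $\field{R}$-linear; it therefore suffices to check their equality on the basis $1,i,j,k$. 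For $h=1$ one has $1_\parallel=1$ and $1_\perp=0$ (since $1$ commutes with $\iota$), so both sides reduce to $\frac{\partial f}{\partial t}$. For $h=i,j,k$ the required equalities are verbatim equations (7), (8), (9), since $e_\parallel=\frac{e-\iota e\iota}{2}$ and $e_\perp=\frac{e+\iota e\iota}{2}$. Linearity then propagates the identity to all $h$, giving the increment as $h_\parallel B(q)+h_\perp C(q)+o(|h|)$.

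The individual computations are routine; the one point requiring care—and the step I expect to be the crux—is the clean recognition of $h_\parallel$ and $h_\perp$ as the projections of $h$ onto $\mathrm{span}(1,\iota)$ and onto its orthogonal imaginary complement, so that the basis-by-basis check aligns exactly with (7)--(9). Everything else is bookkeeping together with the $o(|h|)$ estimate that converts the Taylor remainder into the vanishing of the defining limit.
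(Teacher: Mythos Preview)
Your proposal is correct and follows essentially the same approach as the paper: both write the real-differential increment as $\sum_\ell \frac{\partial f}{\partial x_\ell}\,h_\ell + o(|h|)$, substitute the hypotheses (1)--(3) or (4)--(6), and identify the result with $hA(q)$ or $h_\parallel B(q)+h_\perp C(q)$ before dividing by $h$. The only cosmetic difference is that the paper substitutes (4)--(6) directly and regroups the three imaginary summands into $h_\parallel\frac{\partial f}{\partial t}-\tfrac12 h_\perp Df$, whereas you pass through the equivalent form (7)--(9) and check the identity on the basis $1,i,j,k$ by $\field{R}$-linearity; these are the same computation organized two ways.
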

\begin{proof} Because $f$ is continuosly real differentiable in $\Omega$ then:
\begin{displaymath}
\lim_{h \to 0} \frac{| f(q+h)-f(q) - M_fh |}{|h|} = 0
\end{displaymath}
where $M_f$ is the Jacobi matrix of $f$ at $q$. Now writing $h = h_0 + i h_1 + j h_2 + k h_3$, $h_n \in \field{R}$ we can also write
\begin{displaymath}
M_f h = \frac{\partial f}{\partial t} h_0 +  \frac{\partial f}{\partial y} h_1 + \frac{\partial f}{\partial y} h_2
+ \frac{\partial f}{\partial z} h_3
\end{displaymath}
If $q$ is real one starts with
and use Eq. (1), (2) and (3),
to  obtain
\begin{displaymath}
M_f h = h \frac{\partial f}{\partial t}.
\end{displaymath}
For $q$ non real, and if $f$ satisfies equations (4),(5) and (6) we have
\begin{displaymath}
\frac{\partial f}{\partial t} h_0 +  \frac{\partial f}{\partial y} h_1 + \frac{\partial f}{\partial y} h_2
+ \frac{\partial f}{\partial z} h_3 = 
\end{displaymath}
\begin{displaymath}
\frac{\partial f}{\partial t} h_0 + \Big( \frac{i h_1 - \iota i h_1 \iota}{2} \Big) \frac{\partial f}{\partial t}   \ - \  \Big( \frac{i h_1 + \iota i h_1 \iota}{4} \Big) Df 
\end{displaymath}
\begin{displaymath}
+  \Big( \frac{j h_2 - \iota j h_2 \iota}{2} \Big) \frac{\partial f}{\partial t}   \ - \  \Big( \frac{j h_2 + \iota j  h_2 \iota}{4} \Big) Df
\end{displaymath}
\begin{displaymath}
+ \Big( \frac{ k h_3 - \iota k h_3 \iota}{2} \Big) \frac{\partial f}{\partial t}   \ - \  \Big( \frac{k h_3 + \iota k h_3 \iota}{4} \Big) Df 
\end{displaymath}
which can be rewritten in the compact form
\begin{displaymath}
M_f h = h_{\parallel} \frac{\partial f}{\partial t} + h_{\perp} (\frac{-1}{2} Df)
\end{displaymath}
therefore we conclude that for $q$ real:
\begin{displaymath}
\lim_{h \to 0} |  h^{-1}(f(q+h)-f(q) - h \frac{\partial f}{\partial t})  |= \lim_{h \to 0}  \frac{|  f(q+h)-f(q) -M_f h |}{| h |} = 0
\end{displaymath} 
which implies 
\begin{displaymath}
\lim_{h \to 0}    h^{-1}(f(q+h)-f(q) - h \frac{\partial f}{\partial t})    = 0
\end{displaymath}
and similarly, for a $q$ non-real:
\begin{displaymath}
\lim_{h \to 0} |  h^{-1}(f(q+h)-f(q) - h_{\parallel} \frac{\partial f}{\partial t} + h_{\perp} \frac{1}{2} Df )  |= \lim_{h \to 0}  \frac{|  h(f(q+h)-f(q) -M_f h |}{| h |} = 0
\end{displaymath} 
which implies 
\begin{displaymath}
\lim_{h \to 0}   h^{-1}(f(q+h)-f(q) - h_{\parallel} \frac{\partial f}{\partial t} + h_{\perp} \frac{1}{2} Df )  = 0
\end{displaymath}
and therefore we have found the $A(q), B(q)$ and $C(q)$ required for this function $f$ to be continuosly S-derivable.
\end{proof}
 
\section{Cullen-regularity and commuting directional derivatives}
The above section shows that the four directions $1,i, j$ and $k$ are sufficient and necessary for finding  characteristic equations. However $i, j$ and $k$ 
are mere examples of $\sqrt{-1}$ in $\field{H}$. Let $\field{S}^2 \subseteq \field{H}$ be the set of roots of $-1$, and fix $I \in \field{S}^2$. Since $I^2 = -1$
the set $\field{R} + I \field{R}$  is a isomorphic copy of $\field{C}$. Now let $f$ be
continuously  S-derivable in some $\Omega$ whose intersection with the real numbers is non-empty. For a real $q$ we consider
\begin{displaymath}
\lim_{I \epsilon \to 0} (I \epsilon)^{-1} (f(q + \epsilon I) -f(q)) = \frac{\partial f}{\partial t}
\end{displaymath}
on the other side, interpreting the left-hand side as a directional derivative:
\begin{displaymath}
\lim_{\epsilon \to 0} \frac{f(q + \epsilon I) -f(q)}{\epsilon} = \frac{\partial f}{\partial r}
\end{displaymath}
when restricted to $\field{R} + I \field{R}$. Thus we conclude that for $q$ a real number the expression
\begin{displaymath}
 \frac{\partial f}{\partial t} + I \frac{\partial f}{\partial r} = 0
\end{displaymath}
vanishes in $\field{R} + I \field{R}$. If $q \in \field{H} \setminus \field{R}$, write it as $q = t + r I$, for some $I ^2 = -1$ and $r \geq 0$. Consider increments only  in the imaginary component of $\field{R} + I \field{R}$:
$h = \epsilon I$, then $h_\parallel = \epsilon I$, $h_\perp = 0$. Then
\begin{displaymath}
\frac{\partial f}{\partial r} = \lim_{\epsilon \to 0} \frac{f(q + \epsilon I) -f(q)}{\epsilon} = I \frac{\partial f}{\partial t} 
\end{displaymath}
and, as $I^2 = -1$:
\begin{displaymath}
\frac{\partial f}{\partial t} + I \frac{\partial f}{\partial r} = 0
\end{displaymath}
at $q$. Let $\iota: \field{H} \setminus \field{R} \to \field{S}^2 \subseteq \field{H} \setminus \field{R}$ be the function that sends a quaternion $q = t + r I$ to $I$. Then the above means that $f$ satifies 
\begin{displaymath}
(\frac{\partial }{\partial t} + \iota \frac{\partial }{\partial r})f = 0.
\end{displaymath}
This discussion shows that S-derivable functions are necessarily Cullen-regular in the sense of G. Gentili and D.C Struppa.
\section{The iota-derivative and the perpendicular derivative}
The previous section only considered increments that commuted with the variable $q$. Start parametrizing
the function $\iota$ as $\cos \alpha \sin \beta i + \sin \alpha \sin \beta j + \cos \beta$
writing the left Fueter operator as 
\begin{displaymath}
D f = \frac{\partial f}{\partial t} + \iota \frac{\partial f}{\partial r} - \frac{1}{r} \frac{\partial f}{\partial \iota}
\end{displaymath}
where
\begin{displaymath}
\frac{\partial f}{\partial \iota} :=( \frac{\partial \iota}{\partial \alpha})^{-1} \frac{\partial}{\partial \alpha} +  (\frac{\partial \iota}{\partial \beta})^{-1} \frac{\partial}{\partial \beta}
\end{displaymath}
Which is defined outside the singular subplane $\field{R} + k \field{R}$.
observe that $( \frac{\partial \iota}{\partial \alpha})^{-1}$ and $( \frac{\partial \iota}{\partial \beta})^{-1}$ anti-commute with the function $\iota$, so they are geometrically
perpendicular to $\iota$. For any continuously real-derivable Cullen-regular function $f$ we can write
\begin{displaymath}
Df = -\frac{1}{r} \frac{\partial f }{\partial \iota}
\end{displaymath}
so the perpendicular derivative can also be written, without loss of generality as
\begin{displaymath}
 \frac{1}{2r} \frac{\partial f }{\partial \iota}.
\end{displaymath}

\section{A criterium for the S-derivability}
\begin{theorem}
Let $f : \Omega \to \field{H}$ be a real-derivable quaternionic function such that 
\begin{itemize}
\item $f$ is S-derivable in $\Omega \cap \field{R}$.
\item $f = u+ \iota v$ for some  $C^1$ quaternionic functions $u, v$.
\item $u$ and $v$ are independent of $\alpha$ and $\beta$.
\item $u$ and $v$ satisfy:
\begin{displaymath}
\frac{\partial u}{\partial t} - \frac{\partial v}{\partial r} = \frac{\partial u}{\partial r} + \frac{\partial v}{\partial t} = 0
\end{displaymath} 
$\Omega \setminus \field{R}$
\end{itemize}
then with this conditions $f$ is continuously left S-derivable in $\Omega$ and its perpendicular derivative is
$\dfrac{v}{r}$.
\end{theorem}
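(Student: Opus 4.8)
The plan is to reduce everything to the compact Jacobian identity $M_f h = h_\parallel \frac{\partial f}{\partial t} + h_\perp C(q)$ which, by the sufficient-condition Proposition, characterizes S-derivability for a $C^1$ function. The real case $\Omega \cap \field{R}$ is handed to us by hypothesis, so all the work is in $\Omega \setminus \field{R}$. I would first record that the two scalar conditions on $u$ and $v$ are exactly Cullen-regularity of $f$: writing $f = u + \iota v$ and differentiating with $\iota = I$ held fixed gives $\frac{\partial f}{\partial t} + \iota\frac{\partial f}{\partial r} = \left(\frac{\partial u}{\partial t} - \frac{\partial v}{\partial r}\right) + \iota\left(\frac{\partial u}{\partial r} + \frac{\partial v}{\partial t}\right)$, and the two hypotheses kill both brackets. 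Hence the Section~4 and Section~5 machinery applies and $Df = -\frac{1}{r}\frac{\partial f}{\partial \iota}$.

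Next I would split a general increment at a non-real $q = t + rI$ as $h = h_\parallel + h_\perp$, where $h_\parallel$ lies in the commuting plane $\field{R} + I\field{R}$ and $h_\perp$ is orthogonal to $I$; one checks $IhI = -h$ on the plane and $IhI = h$ on its complement, so this matches the definition of $h_\parallel, h_\perp$. On the parallel part, $M_f$ acts through the $t$- and $r$-directional derivatives, and Cullen-regularity in the form $\frac{\partial f}{\partial r} = I\frac{\partial f}{\partial t}$ collapses this to $M_f h_\parallel = h_\parallel \frac{\partial f}{\partial t}$, reproducing $B(q) = \frac{\partial f}{\partial t}$ from the first Proposition.

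The perpendicular part is the crux. Moving $q$ by $\epsilon J$ with $J \perp I$ a unit imaginary keeps $t$ and $r$ fixed to first order while rotating the axis $I \mapsto I + \tfrac{\epsilon}{r}J$; since $u$ and $v$ are independent of $\alpha,\beta$ (equivalently of $\iota$), only the factor $\iota$ in $f = u + \iota v$ varies, giving $\delta f = \tfrac{\epsilon}{r}J\,v$ and hence $M_f h_\perp = h_\perp \tfrac{v}{r}$. The delicate points here are extracting the $\tfrac1r$ from normalizing the perturbed imaginary part and keeping the multiplication on the correct side ($h_\perp$ on the left, $v$ on the right). Equivalently, one can verify $\frac{\partial f}{\partial \iota} = \left(\frac{\partial\iota}{\partial\alpha}\right)^{-1}\frac{\partial f}{\partial\alpha} + \left(\frac{\partial\iota}{\partial\beta}\right)^{-1}\frac{\partial f}{\partial\beta} = 2v$ directly from the independence of $u,v$, so that $C(q) = -\tfrac12 Df = \tfrac{1}{2r}\frac{\partial f}{\partial\iota} = \tfrac{v}{r}$, matching the claimed perpendicular derivative.

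Finally I would assemble the two pieces into $M_f h = h_\parallel \frac{\partial f}{\partial t} + h_\perp \tfrac{v}{r}$ for all $h$, and invoke continuous real-differentiability to pass from this first-order identity to the vanishing of the defining S-derivative limit, with $B(q) = \frac{\partial f}{\partial t}$ and perpendicular derivative $C(q) = \tfrac{v}{r}$. I expect the perpendicular computation to be the main obstacle, both because of the geometry behind the normalization factor $\tfrac1r$ and because one must confirm that the $\alpha,\beta$-independence of $u,v$ is precisely what makes the orthogonal directional derivative equal left multiplication by the single quaternion $\tfrac{v}{r}$; the parallel reduction and the Cullen-regularity step are routine by comparison.
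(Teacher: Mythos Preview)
Your proposal is correct and reaches the same conclusion, but the route differs from the paper's. The paper proceeds coordinate-wise: it expands $\partial_x f,\partial_y f,\partial_z f$ via the product rule on $f=u+\iota v$, uses the $\alpha,\beta$-independence to write $\partial_x u=\tfrac{x}{r}\partial_r u$ etc., substitutes $\partial_r f=\iota\,\partial_t f$ from the Cauchy--Riemann conditions on $u,v$, and then invokes the explicit algebraic identities
\[
\tfrac{x}{r}\,\iota=\tfrac{i-\iota i\iota}{2},\qquad \iota_x=\tfrac{1}{r}\,\tfrac{i+\iota i\iota}{2}
\]
(and the analogous $y,z$ versions) to recognize equations~(4)--(6) with $Df=-2v/r$; the sufficient-condition Proposition then finishes. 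You instead bypass the Cartesian partials entirely, splitting $h=h_\parallel+h_\perp$ geometrically and computing $M_f$ on each piece: Cullen-regularity handles $h_\parallel$, and the first-order rotation $\iota\mapsto\iota+\tfrac{1}{r}h_\perp$ (with $t,r$ fixed) handles $h_\perp$. Your approach is more intrinsic and makes the role of the $\alpha,\beta$-independence transparent; the paper's approach has the advantage of explicitly verifying the characteristic equations~(4)--(6), which is the form in which S-derivability was packaged in Theorem~1. The two arguments are really the same computation in polar versus Cartesian dress, and your alternative check $\partial_\iota f=2v\Rightarrow Df=-2v/r$ is exactly the bridge between them.
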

\begin{proof}
Let $f$  satisfy the hypothesis. We show $f$ is continuosly S-derivable in $\Omega \setminus \field{R}$. Computing
the partial derivatives:
\begin{displaymath}
\frac{\partial f}{\partial x} = \frac{\partial u}{\partial x}  + \iota_x v + \iota \frac{\partial v}{\partial x} 
\end{displaymath}
\begin{displaymath}
\frac{\partial f}{\partial y} = \frac{\partial u}{\partial y} + \iota_y v + \iota \frac{\partial v}{\partial y} 
\end{displaymath}
\begin{displaymath}
\frac{\partial f}{\partial z} = \frac{\partial u}{\partial z}  + \iota_z v + \iota \frac{\partial v}{\partial z} 
\end{displaymath}
on the other hand, since neither $u$ nor $v$ depend on $\alpha$ and $\beta$ implies that
\begin{displaymath}
\frac{\partial u}{\partial x} = \frac{\partial u}{\partial r} \frac{x}{r}, \ \frac{\partial v}{\partial x} = \frac{\partial v}{\partial r} \frac{x}{r}
\end{displaymath}
\begin{displaymath}
\frac{\partial u}{\partial y} = \frac{\partial u}{\partial r} \frac{y}{r}, \ \frac{\partial v}{\partial y} = \frac{\partial v}{\partial r} \frac{y}{r}
\end{displaymath}
\begin{displaymath}
\frac{\partial u}{\partial z} = \frac{\partial u}{\partial r} \frac{z}{r}, \ \frac{\partial v}{\partial z} = \frac{\partial v}{\partial r} \frac{z}{r}
\end{displaymath}
thus we can write
\begin{displaymath}
\frac{\partial f}{\partial x} =  \frac{x}{r} \frac{\partial f}{\partial r} + \iota_x v
\end{displaymath}
\begin{displaymath}
\frac{\partial f}{\partial y} =  \frac{y}{r} \frac{\partial f}{\partial r} + \iota_y v
\end{displaymath}
\begin{displaymath}
\frac{\partial f}{\partial z} =  \frac{z}{r} \frac{\partial f}{\partial r} + \iota_z v
\end{displaymath}
the hypothesis on $f$ imply that we can replace $\frac{\partial f}{\partial r}$ with $\iota \frac{\partial f}{\partial t}$.  
\begin{displaymath}
\frac{\partial f}{\partial x} =  \frac{x}{r}  \iota \frac{\partial f}{\partial t} + \iota_x v
\end{displaymath}
\begin{displaymath}
\frac{\partial f}{\partial y} =  \frac{y}{r} \iota \frac{\partial f}{\partial t} + \iota_y v
\end{displaymath}
\begin{displaymath}
\frac{\partial f}{\partial z} =  \frac{z}{r} \iota \frac{\partial f}{\partial t} + \iota_z v
\end{displaymath}
finally we need the following identities:
\begin{displaymath}
\frac{x}{r} \iota =  \Big( \frac{i - \iota i \iota}{2} \Big), \  \frac{y}{r} \iota =  \Big( \frac{j - \iota j \iota}{2} \Big), \  \frac{z}{r} \iota =  \Big( \frac{k - \iota k \iota}{2} \Big)
\end{displaymath}
and
\begin{displaymath}
 \iota_x = \frac{1}{r} \Big( \frac{i + \iota i \iota}{2} \Big), \   \iota_y =   \frac{1}{r} \Big( \frac{j + \iota j \iota}{2} \Big), \  \iota_z =   \frac{1}{r}\Big( \frac{k + \iota k \iota}{2} \Big)
\end{displaymath}
Since the function is Cullen-regular and neither $u$ nor $v$ depend of $\alpha$ and $\beta$ we conclude
that
\begin{displaymath}
Df = -2 \frac{v}{r},
\end{displaymath}
so  $f$ satisfies equations (4),(5) and (6).
\end{proof}
\begin{corollary}
The function $\iota$ is continuously S-derivable in $\field{H} \setminus \field{R}$.
\end{corollary}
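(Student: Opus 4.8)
The plan is to apply the preceding theorem directly to $f = \iota$, using the trivial decomposition $u \equiv 0$, $v \equiv 1$. The key observation is that $\iota$ already presents itself in the required form $f = u + \iota v$: writing $q = t + rI$ we have, by the very definition of the function, $\iota(q) = I = \iota$, so that $\iota = 0 + \iota \cdot 1$.

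First I would confirm that $\iota$ is real-differentiable on $\field{H} \setminus \field{R}$. Off the real axis one has $\iota = (xi + yj + zk)/r$ with $r = \sqrt{x^2 + y^2 + z^2} > 0$, which is smooth in the coordinates $(t,x,y,z)$; in particular $\iota$ is $C^1$ there, so this preliminary point causes no trouble.

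Next I would check the four hypotheses of the theorem with $\Omega = \field{H} \setminus \field{R}$. The condition that $f$ be S-derivable on $\Omega \cap \field{R}$ is vacuous, since $(\field{H} \setminus \field{R}) \cap \field{R} = \emptyset$. The decomposition $f = u + \iota v$ holds with $u \equiv 0$ and $v \equiv 1$, both manifestly $C^1$. Being constants, $u$ and $v$ are trivially independent of $\alpha$ and $\beta$. Finally, every partial derivative of $u$ and of $v$ vanishes, so that the equations $\tfrac{\partial u}{\partial t} - \tfrac{\partial v}{\partial r} = 0$ and $\tfrac{\partial u}{\partial r} + \tfrac{\partial v}{\partial t} = 0$ hold identically on $\Omega \setminus \field{R}$.

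With all hypotheses verified, the theorem immediately yields that $\iota$ is continuously S-derivable in $\field{H} \setminus \field{R}$, and that its perpendicular derivative equals $v/r = 1/r$. There is essentially no obstacle here; the only point deserving a moment's care is the (self-referential but consistent) identification of the function $\iota$ with the symbol $\iota$ appearing in the decomposition $u + \iota v$, together with the smoothness of $\iota$ away from the real axis noted above.
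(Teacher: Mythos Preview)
Your proposal is correct and follows exactly the paper's approach: the paper's entire proof is the single line ``Take $u=0$ and $v=1$,'' and you have simply spelled out the verification of the hypotheses (including the vacuous condition on $\Omega\cap\field{R}$ and the smoothness of $\iota$ off the real axis) in more detail.
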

\begin{proof}
Take $u =0$ and $v =1$. 
\end{proof}
\begin{corollary}
The power function is S-derivable in $\field{H}$.
\end{corollary}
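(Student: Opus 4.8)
The plan is to verify that the power function $f(q) = q^{n}$, for $n \in \field{N}$, satisfies every hypothesis of the preceding theorem, so that its conclusion applies directly. The essential algebraic fact I would exploit is that for $q = t + rI$ with $I = \iota(q) \in \field{S}^2$, every power $q^{n}$ remains inside the slice $\field{R} + I\field{R}$, which is an isomorphic copy of $\field{C}$; consequently $q^{n}$ is computed on that slice exactly like a complex $n$-th power.

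First I would expand by the binomial theorem, using that $t$, being real, commutes with $I$:
\begin{displaymath}
q^{n} = (t + rI)^{n} = \sum_{k=0}^{n} \binom{n}{k} t^{\,n-k} r^{k} I^{k}.
\end{displaymath}
Since $I^2 = -1$, the even-$k$ terms contribute real multiples of $1$ and the odd-$k$ terms real multiples of $I$. Collecting them defines the scalar functions
\begin{displaymath}
u(t,r) = \sum_{k \text{ even}} \binom{n}{k} (-1)^{k/2} t^{\,n-k} r^{k}, \qquad v(t,r) = \sum_{k \text{ odd}} \binom{n}{k} (-1)^{(k-1)/2} t^{\,n-k} r^{k},
\end{displaymath}
so that $q^{n} = u + \iota v$. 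These are precisely the real and imaginary parts of the ordinary complex power $(t+ri)^{n}$. By construction $u$ and $v$ depend only on $t$ and $r$, and not on the angular coordinates $\alpha, \beta$; being polynomials they are $C^{1}$. Thus the second and third hypotheses of the theorem hold.

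Next I would verify the Cauchy-Riemann type relations. Because $z \mapsto z^{n}$ is holomorphic on $\field{C}$, its real and imaginary parts satisfy the classical equations $\partial u/\partial t = \partial v/\partial r$ and $\partial u/\partial r = -\partial v/\partial t$, which is exactly the fourth hypothesis
\begin{displaymath}
\frac{\partial u}{\partial t} - \frac{\partial v}{\partial r} = \frac{\partial u}{\partial r} + \frac{\partial v}{\partial t} = 0
\end{displaymath}
on $\field{H} \setminus \field{R}$. It remains to check S-derivability at real points. At a real base point $q_{0}$, expanding $(q_{0}+h)^{n} = \sum_{k=0}^{n} \binom{n}{k} q_{0}^{\,n-k} h^{k}$ (valid since $q_{0}$ commutes with the increment $h$) isolates a linear term $n q_{0}^{\,n-1} h = h\, n q_{0}^{\,n-1}$ with remainder of order $|h|^{2}$; hence the defining limit vanishes with $A(q_{0}) = n q_{0}^{\,n-1} = \partial f/\partial t$, establishing the first hypothesis.

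With all four hypotheses verified, the theorem yields that $f(q) = q^{n}$ is continuously left S-derivable throughout $\field{H}$, with perpendicular derivative $v/r$. I expect the only delicate point to be the justification that the decomposition $q^{n} = u + \iota v$ genuinely uses the variable root $\iota = \iota(q)$ rather than a fixed imaginary unit; that is, that taking powers preserves the slice $\field{R} + I\field{R}$. This slice-invariance is what guarantees $u$ and $v$ are functions of $t$ and $r$ alone and allows the complex-analytic Cauchy-Riemann equations to transfer verbatim, so I would state it explicitly before invoking the theorem.
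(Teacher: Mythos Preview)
Your proof is correct and follows essentially the same route as the paper: verify the four hypotheses of the preceding theorem for $f(q)=q^{n}$ and conclude. The paper is terser---it asserts the decomposition $q^{n}=u_{n}(t,r)+\iota\,v_{n}(t,r)$ and the Cauchy--Riemann relations without writing out the binomial sums, and at real points it checks equations (1)--(3) via the partial derivatives $\partial q^{n}/\partial x = i\,n q^{n-1}$, etc., rather than expanding $(q_{0}+h)^{n}$---but the structure is identical.
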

\begin{proof}
Let $f(q) =  q^n$ for some $n \in \field{N}$, $f$ is continuously real-derivable in $\field{H}$. Let $q$ be a real number. Then
\begin{displaymath}
\frac{\partial}{\partial t} q^n = n q^{n-1} 
\end{displaymath}
\begin{displaymath}
\frac{\partial }{\partial x} q^n = i n q^{n-1}
\end{displaymath}
\begin{displaymath}
\frac{\partial }{\partial y} q^n = j n q^{n-1}
\end{displaymath}
\begin{displaymath}
\frac{\partial }{\partial z} q^n = k n q^{n-1}
\end{displaymath}
and therefore $q^n$ is S-derivable in $\field{R}$. Since $q^n = u_n (t,r) + \iota v_n(t,r)$, for some real functions $u_n$ and $v_n$ satisfying
\begin{displaymath}
\frac{\partial u_n}{\partial t} - \frac{\partial v_n}{\partial r} = \frac{\partial u_n}{\partial r} + \frac{\partial v_n}{\partial t} = 0
\end{displaymath} 
we obtain the desired result.
\end{proof}
\begin{corollary}
Let
\begin{displaymath}
f(q) =  \sum_{n=0}^{\infty} q^n a_n
\end{displaymath}
be an absolutely and uniformly convergent series in a ball $B(0,R)$. Then $f$ is S-derivable in $B(0,R)$.
\end{corollary}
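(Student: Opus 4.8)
The plan is to bootstrap from the preceding corollary, which establishes that each monomial $q^n$ is S-derivable in $\field{H}$, together with two structural facts: that S-derivability is a \emph{linear} condition on $f$, and that it is preserved under right-multiplication by a constant quaternion. First I would record the latter: if $g$ is S-derivable then so is $ga$ for any fixed $a \in \field{H}$, because equations (1)--(6) are expressed entirely through partial derivatives and left-multiplication by $i,j,k$ (the Fueter operator $D$), all of which commute with right-multiplication by the constant $a$. Concretely $\partial_x(ga) = (\partial_x g)a = (i\,\partial_t g)a = i\,\partial_t(ga)$, and similarly $D(ga) = (Dg)a$, so every equation survives. Hence each term $q^n a_n$ is S-derivable, and by linearity of equations (1)--(6) each partial sum $S_N(q) = \sum_{n=0}^{N} q^n a_n$ is S-derivable in $\field{H}$.

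Next I would pass to the limit by invoking the criterium theorem. Writing each monomial as $q^n = u_n(t,r) + \iota\, v_n(t,r)$ with $u_n, v_n$ real and independent of $\alpha,\beta$, I would set $U := \sum_n u_n a_n$ and $V := \sum_n v_n a_n$, so that $f = U + \iota V$ with $U,V$ again independent of $\alpha$ and $\beta$. Each pair $(u_n,v_n)$ satisfies $\partial_t u_n - \partial_r v_n = \partial_r u_n + \partial_t v_n = 0$, and since this system is linear it is inherited by $U$ and $V$ \emph{provided} differentiation may be performed term by term. To settle the real-axis hypothesis and the required $C^1$ regularity, I would compute the partial derivatives of $f$ from the series $\sum_n n\,q^{n-1}a_n$ and check on $B(0,R)\cap\field{R}$ that $\partial_x q^n = i\,\partial_t q^n$, and likewise for $j,k$, matching equations (1)--(3) after summation.

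The main obstacle is precisely the justification of term-by-term differentiation, since the hypothesis only furnishes absolute and uniform convergence of the series itself, not of its derivatives. I would dispatch this with the standard power-series estimate: the formally differentiated series $\sum_n n\,q^{n-1}a_n$ has the same radius of convergence $R$, because $\partial_t q^n$, $\partial_x q^n$, $\partial_y q^n$, $\partial_z q^n$ are each bounded in modulus by $n\,|q|^{n-1}$ (the quaternion norm being multiplicative), and $\sum_n n\,\rho^{\,n-1}|a_n|$ converges for every $\rho < R$ since $n^{1/n}\to 1$ leaves the root-test limit unchanged. Consequently, on each closed ball $\overline{B(0,\rho)}$ with $\rho<R$ the series of partial derivatives converges uniformly, so $f$ is $C^1$ on $B(0,R)$, its partial derivatives equal the termwise sums, and $U,V$ satisfy the Cauchy--Riemann system throughout $\Omega\setminus\field{R}$.

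With these points in place all four hypotheses of the criterium theorem are verified on $B(0,R)$, and I would conclude that $f$ is continuously left S-derivable in the whole ball, its perpendicular derivative being $\tfrac{1}{r}\sum_n v_n a_n = V/r$ in accordance with the theorem. I expect the only genuinely delicate estimate to be the uniform convergence of the differentiated series on compacta; everything else is a direct bookkeeping of the linear and commutation properties already assembled above.
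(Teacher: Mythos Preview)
Your proposal is correct and follows essentially the same route as the paper: decompose $f = U + \iota V$ with $U=\sum u_n a_n$, $V=\sum v_n a_n$, verify equations (1)--(3) on the real axis by termwise differentiation, verify the Cauchy--Riemann system for $U,V$ off the real axis, and apply the criterium theorem. Your treatment is actually more careful than the paper's, which simply asserts that uniform and absolute convergence justifies termwise differentiation; your root-test argument for the differentiated series fills that gap properly.
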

\begin{proof}
Observe such $f$ is real-derivable in $B(0,1)$, (in fact is real-analytic). Let $q \in B(0,R)$ be real. Uniform and absolute convergence means the series can be partial-derivated termwise.
Thus one obtains that $f$ satisfies  equations (1),(2) and (3) for every $q$ in $B(0,R) \cap \field{R}$. On the other side we have
\begin{displaymath}
\sum_{n=0}^{\infty} q^n a_n = \sum_{n=0}^{\infty} u_n a_n + \iota  \sum_{n=0}^{\infty} v_n a_n = u(t,r) + \iota v(t,r)
\end{displaymath}
and such $u$ and $v$ satisfy:
\begin{displaymath}
\frac{\partial u}{\partial t} - \frac{\partial v}{\partial r} = \frac{\partial u}{\partial r} + \frac{\partial v}{\partial t} = 0
\end{displaymath} 
\end{proof}
Observe that for functions constructed as in this section one has $f(\overline{q})= u(q) - \iota v(q)$. So they also must also satify the
following equalities:
\begin{displaymath}
\frac{1}{2r} \frac{\partial f }{\partial \iota} = \frac{v}{r} = (\iota r)^{-1} (\iota v) = (q - \overline{q})^{-1}(f(q) - f(\overline{q}))
\end{displaymath}
We are now ready to prove \textbf{Theorem 2}.
\begin{proof}
In \cite{GG} it is proved that every Cullen-regular functions $f :B(0,R) \to \field{H}$ can be expressed as such
power series. Since S-derivable functions are in particular Cullen-regular this theorem remains
valid. On the other side, functions expressible as such power series have been shown to be
S-derivable themselves. The identities $A(q) =  B(q) = \frac{\partial f}{\partial t}$ and $Df = -2  (q - \overline{q})^{-1}(f(q) - f(\overline{q}))$
holds for all functions constructed as in \textbf{Theorem 3} and such power series are of this form.
\end{proof}



\begin{thebibliography}{99}
\bibitem{GG}
G. Gentili, D.C. Struppa, \textit{A new theory of regular functions of a quaternionic variable}, Adv. 
Math. \textbf{216}, 279-301. (2007)
\bibitem{CS}
C. Schwartz, \textit{Calculus with a quaternionic variable} J. Math. Phys. \textbf{50}, 013523 (2009)
\end{thebibliography}
\end{document}